\setlist[enumerate]{leftmargin=.5in}
\setlist[itemize]{leftmargin=.5in}
\DeclareMathAlphabet{\mathb}{OML}{cmm}{b}{it}
\newcommand{\R}{\mathbb{R}}		% R double barre (réels)
\newcommand{\X}{\mathcal{X}}
\newcommand\innerprod[2]{\left\langle #1\,|\, #2 \right\rangle}
\newcommand{\fonc}[3]{#1:  #2  \rightarrow  #3}					% fonction sans 2ème ligne
\newcommand{\syst}[1]{\left \{ \begin{array}{l} #1 \end{array} \right. \kern-\nulldelimiterspace}	% système
\newcommand{\prox}{\text{\normalfont prox}}
\newcommand{\epsprox}{\varepsilon\text{\normalfont-prox}}
\newcommand{\dom}{\text{\normalfont dom}\,}
\newcommand{\minimize}[2]{\ensuremath{\underset{\substack{{#1}}}{\mathrm{minimize}}\;\;#2 }}
\newlength{\algorithmboxrule}
\newcommand{\algorithmboxcolor}{white}
\xpatchcmd*{\algocf@caption@boxruled}{0.0pt}{2\algorithmboxrule}{}{}
\xpatchcmd*{\algocf@caption@boxruled}{\vrule}{\vrule width \algorithmboxrule}{}{}
\xpatchcmd{\algocf@caption@boxruled}{\hrule}{\hrule height \algorithmboxrule}{}{}
\xpretocmd{\algocf@caption@boxruled}{\color{\algorithmboxcolor}}{}{}
\newtheorem{proposition}{Proposition}
\newtheorem{theorem}{Theorem}
\newtheorem{corollary}{Corollary}
\newtheorem{example}{Example}
\theoremstyle{definition}
\newtheorem{definition}{Definition}
\newtheorem{remark}{Remark}
\definecolor{shadecolor}{rgb}{0.8,0.8,0.8}
\title{Calculus rules for proximal $\varepsilon\text{-}$subdifferentials and inexact proximity operators for weakly convex functions}
\author{Ewa Bednarczuk\thanks{Warsaw University of Technology,  00-662 Warsaw, Koszykowa 75, Poland} \thanks{Systems Research Institute, PAS,  01-447 Warsaw, Newelska 6, Poland}
\and Giovanni Bruccola\footnotemark[2] \and Gabriele Scrivanti\thanks{Université Paris-Saclay, Inria, CentraleSupélec, CVN, 3 Rue Joliot Curie, 91190, Gif-Sur-Yvette, France.} 
\and The Hung Tran\footnotemark[2]}
\date{Spring 2022}
\pgfplotsset{compat=1.18}
\begin{document}
\maketitle

\begin{abstract} 
 
We investigate proximal $\varepsilon\text{-}$subdifferentials
and derive sum rules that hold for weakly convex function,
by incorporating the corresponding  moduli of weak convexity into the respective formulas.
As an application, we 
analyse inexact proximity operators for weakly convex functions in terms of proximal $\varepsilon\text{-}$subdifferentials and the related notion of criticality.
\begin{comment}

\end{comment}
\end{abstract}

KEYWORDS: weakly convex functions, criticality, proximal operator, inexactness, inexact proximal operator, sum rule for proximal $\varepsilon\text{-}$subdifferentials

\section{Introduction}
\label{sec:Introduction}
Proximal operators are a fundamental tool in constructing algorithms for solving large-scale convex optimisation problems \cite{Combettes2011}.   Responding to the need for solving  optimisation problems with convex objectives which do not fall into the class of typical convex objective functions appearing in data analysis (see \textit{e.g.}\ the webpage \cite{proxpage}), a number of inexact (approximate) proximal operators  have been  introduced, see  \textit{e.g.}\
\cite{salzo2012inexact, villa2013accelerated, rasch2020inexact}.

It is our aim  to investigate  inexact proximal operators for a class of functions which is larger than the one of convex functions: in the present work, we focus on weakly convex functions, which have been appearing in current models in data science problems in a rapidly growing number. 
Examples of weakly convex functions appearing in data analysis can be found in \cite{bohm2021variable, davis2019stochastic} and in the references therein. The growing interest in the use of this class of function in many fields of applications suggested the necessity of a careful analysis of their properties in terms of subdifferentials and proximal operators, which is the core of the present work.

\paragraph{Weak convexity}

Weak convexity can be considered as a special case of the general notions of $\gamma$-paraconvexity and $\alpha(\cdot)$-paraconvexity that were studied by, among others, Jourani and Rolewicz \cite{jourani1996Subdifferentiability,Rolewicz1979paraconvex,rolewicz2005paraconvex}. For a general characterisation in Hilbert spaces, weakly convex functions can be expressed as the difference between a convex function and a quadratic function. This class includes all the convex functions and all the smooth (but not necessarily convex) functions with a Lipschitz continuous gradient, together with many other interesting non-convex functions. 
\paragraph{Proximal Subdifferentials} 
It has already been observed  that the  concept of subdifferential which is particularly suitable when defining criticality for weakly convex functions  is that of \textit{proximal subdifferential}  (see 
{\em e.g.}\ \cite{Davis2017ProximallyGuidedStochasticSubgradient,Davis2018SubgradientMethodsSharpWeaklyConvex}). There exists a vast literature devoted to proximal subdifferentials, see {\em e.g.}\  in the finite dimensional case, the monograph by Rockafellar and Wets \cite{RockWets1998Variational}, in Hilbert spaces the work by Bernard and Thibault \cite{Bernard2005_ProxRegular}. % Prox-regular functions in Hilbert spaces, J. Math. Anal. Appl. 303 (2005) 1–14. 
 In these monographs and papers, the proximal subdifferential at a given $x_{0}$ is defined locally, in the sense that
  there exists a neighbourhood $V$ of $x_0$ and a constant $C\ge0$, such that for every $x\in V$,
\begin{equation}
\label{eq:intro:prox_sub}
    \langle x^*,x-x_0\rangle \leq f(x) - f(x_0) + C\|x-x_0\|^{2}.
\end{equation}
 
 In our developments we make use of a property that holds  in the class of paraconvex functions, called \textit{globalisation property}. Precisely, in the class of paraconvex functions, if inequality \eqref{eq:intro:prox_sub} holds in a neighbourhood of $x_0$, then it holds globally over the whole space for every $x\in\X$ with 
the same constant $C$ (see Prop. \ref{prop:globalisation} and Def. \ref{def:proximal_subdifferential}). This property allows us to treat the proximal subdifferential as a global tool - exactly as the convex subdifferential is treated with respect to convex functions - and deduce results in analogy to the convex setting.
More on the \textit{globalisation property} can be found in
 \cite{rolewicz1993globalization}.
 
We base our analysis on the more general notion of proximal $\varepsilon\text{-}$subdifferentials, which represents a useful tool allowing to take into account inexactness and perturbations in optimisation algorithms.
 
\paragraph{Contribution} 
Our contribution addresses the following issues.

\begin{enumerate}
    \item We start our analysis by providing sufficient and necessary conditions for the sum rule of the global proximal $\varepsilon\text{-}$subdifferentials for the sum of two $\rho\text{-}$weakly convex functions (see Theo. 
    \ref{theo: MReps} and Theo.
    \ref{theo: kruger:2}) (Section \ref{sec:Calculus_Rules}).
    We incorporate and make a consistent use of the modulus of proximal subdifferentiability and of the modulus of weak convexity $\rho$ into the calculus rules for proximal $\varepsilon\text{-}$subdifferentials. 

    \item 
    By using the above calculus rules, in Prop. \ref{cor:eps:prox:2} and Prop. \ref{cor:proxsub},
    we investigate the relationship between the ${\varepsilon\text{-proximal}}$ operator of a $\rho\text{-}$weakly convex function $f$ and the ${\varepsilon\text{-proximal}}$  subdifferential of $f$ (Section \ref{sec:inexact}).
    \item
     We relate the notion of inexact (approximate) proximal point that we infer to  \textit{Type}-1 and \textit{Type}-2 \textit{approximations} proposed, in the convex settings, by \cite{salzo2012inexact,rasch2020inexact}  (Section \ref{sec:inexact}).
\end{enumerate}

\section{Preliminaries}\label{sec:Preliminaries}
Before focusing on the class of weakly convex functions, we introduce a more general notion of ${\gamma\text{-paraconvexity}}$ and the corresponding notion of ${(\gamma,C)\text{-subdifferential}}$ as presented in \cite{jourani1996Subdifferentiability},  for $\gamma>0$ and $C\geq 0$. The class of $\gamma$-paraconvex functions has been studied in \cite{Rolewicz1979paraconvex}. For $\gamma=2$, we obtain weakly convex functions.\\
\begin{definition}[\textit{$\gamma$-Paraconvexity}]\label{def:paraconvexity}
Let $\mathcal{X}$ be a normed vector space. A function $\fonc{f}{\mathcal{X}}{(-\infty,+\infty]}$ is said to be $\gamma$\emph{-paraconvex} for $\gamma >0$, if there exists a positive constant $C$ such that for $\lambda\in[0,1]$, $(\forall (x,y)\in \mathcal{X}^2)$, the following inequality holds:
\begin{equation}
 f(\lambda x + (1-\lambda)y) \leq  \lambda f(x) + (1-\lambda) f(y) + C\lambda(1-\lambda)\|x-y\|^{\gamma}.
\end{equation}
\end{definition}
\vspace{0.2cm}

When $\alpha: [0,+\infty)\rightarrow [0,+\infty)$  is a non-decreasing function with
$\lim_{t\downarrow 0}\frac{\alpha(t)}{t}=~0$,
 a function ${\fonc{f}{\mathcal{X}}{(-\infty,+\infty]}}$ is called {\em $\alpha(\cdot)$-paraconvex} if there exists
a constant $C>0$ such that for $\lambda\in[0,1]$ 
\begin{equation}
\begin{aligned}
 &(\forall (x,y)\in \mathcal{X}^2)\qquad f(\lambda x + (1-\lambda)y)\\
 &\ \leq  \lambda f(x) + (1-\lambda) f(y) + C\min\{\lambda, 1-\lambda\}\alpha(\|x-y\|)
 \end{aligned}
\end{equation}
This class has been introduced by Rolewicz under the name of $\alpha(\cdot)$-strongly paraconvex functions and investigated in a series of papers by Jourani \cite{jourani1996open,jourani1996Subdifferentiability} and Rolewicz \cite{Rolewicz1979paraconvex,rolewicz2005paraconvex} . When $\alpha(\|x-y\|)=\|x-y\|^{\gamma}$ with $1<\gamma\leq 2$, the notion of $\alpha(\cdot)$-paraconvexity coincides with the one of $\gamma$-paraconvexity, (see \cite[Lemma 5]{rolewicz2001uniformly}). In Hilbert spaces, when 
$$
\limsup_{t\downarrow 0}\frac{\alpha(t)}{t^{2}}<+\infty,
$$
then a $\alpha(\cdot)$-paraconvex function is a difference of a convex and a quadratic function and is called {\em weakly convex} (see \cite{rolewicz2005paraconvex}). \\

\begin{definition}[$(\gamma,C)-$\textit{Subdifferential}{ \cite[Def. 3.1]{jourani1996Subdifferentiability}}]\label{def:Csubg_Jourani}
Let $\mathcal{X}$ be a normed vector space. By $\X^{*}$ we denote the  dual space of all continuous linear functionals defined on $\X$. Let $\gamma>0$ and $C>0$. %Let $f$ be an extended real-valued function\ewa{?} on $\mathcal{X}$ that is finite at $x_0$.
Let ${\fonc{f}{\X}{(-\infty,+\infty]}}$ and $x_0 \in \dom f$. A point $x^*\in\mathcal{X}^*$ is said to be a $(\gamma,C)$-\textit{subgradient} of $f$ at $x_0$ if there exists a neighbourhood $V$ of $x_0$ such that the following \emph{subdifferential inequality} holds
\begin{equation}
\label{eq:def:prox_sub}
    (\forall x\in V)\quad \langle x^*,x-x_0\rangle \leq f(x) - f(x_0) + C\|x-x_0\|^{\gamma}.
\end{equation}
The set of all $(\gamma,C)-$subgradients
of $f$ at $x_0$ is denoted by $\partial_{(\gamma,C)}^{Loc}f(x_0)$ and it is referred to as $(\gamma,C)\text{-\textit{Subdifferential}}$. 
Whenever $\partial_{(\gamma,C)}^{Loc}f(x_0)\neq\emptyset$, we say that $f$ is \textit{proximally $C$-subdifferentiable} at $x_{0}$.\\
\end{definition}

{\begin{example}\label{ex:1paraconvexity}
    Let $\X = \R$. The function $f$ defined for every $x\in\X$ such as $f(x) = ||x|-1|$ is 1-paraconvex with $C=2$ (see \cite{jourani1996Subdifferentiability}). In addition, for $x_0 = 0$, we have that $\partial_{(1,2)}^{Loc} f(x_0) = [-1,1]$ since the following subdifferential inequality holds:
    \begin{flalign}
    \quad(\forall x \in \R)&& x^*x \leq ||x|-1|-1+ 2|x|. &&&
    \end{flalign}
    for every $x^* \in [-1,1]$.\\
    \end{example}
    
    It is interesting to notice that for the function in Example \ref{ex:1paraconvexity}, the subdifferential inequality from Def. \ref{def:Csubg_Jourani} at $x_0=0$ holds not only in a neighbourhood of the point, but on the whole space. For $\gamma$-paraconvex functions with $\gamma>1$, it is possible to show that the subdifferential inequality is always satisfied globally, meaning for every $x\in\X$, as stated by the following proposition:\\
    }
   
\begin{proposition}[ {\cite[Prop. 3.1]{jourani1996Subdifferentiability}}]\label{prop:globalisation}
Let $\mathcal{X}$ be a normed  space. Let ${\fonc{f}{\X}{(-\infty,+\infty]}}$ be $\gamma\text{-}$paraconvex with $\gamma>1$. Then there exists $C>0$ such that
\begin{equation}
    \partial_{(\gamma,C)}^{Loc}f(x_0) = \partial_{(\gamma,C)}f(x_0)
\end{equation}
where 
\begin{align*}
    \partial_{(\gamma,C)}f(x_0) := \left\{ x^*\in\mathcal{X}^*\,|\, \ \langle x^*,x-x_0\rangle \leq \right. \\
    \left. f(x) - f(x_0) + C\|x-x_0\|^{\gamma} \,\forall\ x\in \mathcal{X} \right\}.
\end{align*}
\end{proposition}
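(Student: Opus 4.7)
The inclusion $\partial_{(\gamma,C)} f(x_0) \subseteq \partial^{Loc}_{(\gamma,C)} f(x_0)$ is immediate from the definitions (any global inequality restricts to any neighborhood), so the whole content is the reverse inclusion. The plan is to choose $C$ to be (any constant at least as large as) the paraconvexity constant of $f$, call it $C_0$, and then promote a local subgradient inequality to a global one by sliding along the segment joining $x_0$ to an arbitrary $x \in \mathcal{X}$.

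\textbf{Key steps.} Fix $x^* \in \partial^{Loc}_{(\gamma,C)} f(x_0)$ and a neighborhood $V$ of $x_0$ on which the inequality \eqref{eq:def:prox_sub} holds. Pick an arbitrary $x \in \mathcal{X}$ and, for $\lambda \in (0,1]$ small enough, set $x_\lambda = \lambda x + (1-\lambda) x_0$; by convexity of $V$ around $x_0$, one has $x_\lambda \in V$ for all sufficiently small $\lambda$. Applying the local definition at $x_\lambda$ and using $x_\lambda - x_0 = \lambda(x - x_0)$ gives
\begin{equation}
\lambda \langle x^*, x - x_0 \rangle \leq f(x_\lambda) - f(x_0) + C \lambda^\gamma \|x - x_0\|^\gamma.
\end{equation}
Next, invoke the $\gamma$-paraconvexity inequality of Definition \ref{def:paraconvexity} with constant $C_0$ to bound
\begin{equation}
f(x_\lambda) - f(x_0) \leq \lambda (f(x) - f(x_0)) + C_0 \lambda(1-\lambda) \|x-x_0\|^\gamma.
\end{equation}
Combine the two estimates, divide by $\lambda > 0$, and obtain
\begin{equation}
\langle x^*, x - x_0 \rangle \leq f(x) - f(x_0) + \bigl[ C_0 (1-\lambda) + C \lambda^{\gamma-1} \bigr] \|x - x_0\|^\gamma.
\end{equation}
Finally let $\lambda \downarrow 0$: here the hypothesis $\gamma > 1$ is essential, since it forces $\lambda^{\gamma - 1} \to 0$, killing the $C$-dependent term. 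The result is the global inequality
\begin{equation}
\langle x^*, x - x_0 \rangle \leq f(x) - f(x_0) + C_0 \|x-x_0\|^\gamma,
\end{equation}
valid for every $x \in \mathcal{X}$. Choosing $C \geq C_0$ yields $x^* \in \partial_{(\gamma,C)} f(x_0)$, establishing the desired equality of the two sets.

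\textbf{Main obstacle.} There is no deep technical difficulty; the only point requiring care is the selection of $C$. The argument naturally produces the paraconvexity constant $C_0$ as the global modulus, while the user begins with an a priori local constant $C$; picking $C = C_0$ (equivalently, the largest of the two) reconciles the two sides of the claimed equality. The role of $\gamma > 1$ is critical: if $\gamma = 1$, the term $C\lambda^{\gamma-1} = C$ does not vanish in the limit, and the globalisation fails, which explains why the statement is restricted to $\gamma > 1$ and in particular applies to the weakly convex case $\gamma = 2$.
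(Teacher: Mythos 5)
Your proof is correct, and it is the standard globalisation argument (slide $x_\lambda=\lambda x+(1-\lambda)x_0$ into the neighbourhood, invoke $\gamma$-paraconvexity, divide by $\lambda$, and use $\gamma>1$ to kill the $C\lambda^{\gamma-1}$ term), which is precisely how the cited result \cite[Prop.~3.1]{jourani1996Subdifferentiability} is obtained; the paper itself states the proposition without proof. The only cosmetic point is that $V$ need not be convex — one should say that $V$ contains a ball $B(x_0,r)$ and that $x_\lambda\in B(x_0,r)$ once $\lambda<r/\|x-x_0\|$ — and your choice $C=C_0$ matches the paper's remark that $C$ can be taken equal to the paraconvexity constant.
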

\vspace{0.2cm}

The constant $C$ can be the same as the one appearing in the definition of paraconvexity (Def. \ref{def:paraconvexity}). For similar results see {\cite[Prop. 3.5]{syga2019global}. }
Occasionally, we will refer to the constant $C$ appearing in the definition of proximal subdifferentiability as the {\em modulus of proximal subdifferentiability}.

  In the sequel we will use the global ${(2,C)\text{-subdifferential}}$ of $f$ at $x$ which 
  will be refferred to as \textit{proximal subdifferential}:
(see \textit{e.g.}\ \cite{RockWets1998Variational, Bernard2005_ProxRegular}).  \\

\begin{definition}[{Global Proximal Subdifferential}]\label{def:proximal_subdifferential}
Let $\mathcal{X}$ be a normed vector space. Let ${\fonc{f}{\X}{(-\infty,+\infty]}}$ and let {$x_0\in \dom f$}. Then the \textit{proximal subdifferential} of $f$ at $x_0$ with constant {$C\geq0$} is defined as the set 
\begin{equation}
\label{eq:def_prox_subdiff}
\begin{split}
    \partial_{(2,C)} f(x_0): = \{x^*\in\mathcal{X}^* \,|\, f(x)\geq f(x_0) \\+ \langle x^*,x-x_0\rangle  - C\|x-x_0\|^{2},\;\forall x\in \mathcal{X}\}.\\
    \end{split}
\end{equation}
\end{definition}
\vspace{0.2cm}

In view of \eqref{eq:def_prox_subdiff}, $\partial_{(2,0)}$ denotes the subdifferential in the sense of convex analysis. For simplicity, in this case we will use the notation $\partial_{\,0} \equiv \partial_{(2,0)}$.\\

When investigating inexact proximal points, the following concept of proximal $\varepsilon\text{-}$subdifferentials is used.\\

\begin{definition}[Global proximal $\varepsilon\text{-}$subdifferentials]\label{def:eps:prox_subdiff}
 Let $\mathcal{X}$ be a normed vector space and $\varepsilon\geq 0$. The global proximal $\varepsilon\text{-}$subdifferentials of a function ${\fonc{f}{\X}{(-\infty,+\infty]}}$ at $x_{0}\in\dom f$ for $C\geq 0$ is defined as follows:
 \begin{equation}
 \begin{split}
     \partial^{\,\varepsilon}_{(2,C)} f (x_0) = \{v\in\X\,|\, f(x)-f(x_{0})\ge \\\langle v, x-x_{0}\rangle-C\|x-x_{0}\|^{2}-\varepsilon\ \ \forall\ x\in\X\}.
     \end{split}
 \end{equation}
\end{definition}
\vspace{0.2cm}

 Clearly, for every $\varepsilon'\geq \varepsilon$ and $C'\geq C$ we have the following inclusion
 \begin{equation}
 \label{eq: mono}
     \partial^{\,\varepsilon}_{(2,C)} f (x_0) \subseteq \partial^{\,\varepsilon'}_{(2,C')} f (x_0).
\end{equation}

In Hilbert spaces, a weakly convex function $f$ in the sense of
Def. \ref{def:paraconvexity} for $\gamma = 2$ and $C = \rho/2$ can be characterised by the fact that $f(\cdot) + \rho/2\|\cdot\|^2$ is a convex function. A  proof can be  obtained by directly adapting the finite-dimensional proof given in {\cite[Prop. 1.1.3]{cannarsa2004semiconcave}}. Such a function will then be referred to as a $\rho$-\textit{weakly convex} and $\rho$ is known as \textit{modulus of weak convexity}.
A variant of Prop. \ref{prop:globalisation} corresponding to $\gamma=2$ and $C$ not necessarily coinciding with the weak convexity parameter can be found in \cite{syga2019global}.\\

For any  set-valued mapping $M: \X \rightrightarrows \X$, we will use the notation $\dom M$ to indicate the set
\begin{equation}
    \dom M := \{x\in \X\,|\, M(x) \neq \emptyset\},
\end{equation}
while for a function ${\fonc{f}{\X}{(-\infty,+\infty]}}$, the notation $\dom f$ will indicate the set
\begin{equation}
    \dom f := \{x\in \X\,|\, f(x) < +\infty\}.
\end{equation}
% When referring to weak convexity and convexity of a function we mean the respective properties on $\dom f$.\\

\begin{proposition}\label{prop: domain}Let $\X$ be a Hilbert space. Let ${\fonc{f}{\X}{(-\infty,+\infty]}}$ be a proper lower semicontinuous and  $\rho\text{-}$weakly convex function with $\rho \geq 0$. Then for every $\varepsilon\geq 0$
\begin{equation}
    \dom \partial_{(2,\rho/2)}^{\,\varepsilon} f = \dom \partial^{\,\varepsilon}_{0} (f + \frac{\rho}{2}\|\cdot\|^2)
\end{equation}
and 
\begin{equation}
   \dom \partial_{(2,\rho/2)}^{\,\varepsilon} f \subset \dom f.
\end{equation}
Moreover, for every $\varepsilon> 0$
\begin{equation}
    \dom \partial_{(2,\rho/2)}^{\,\varepsilon} f = \dom f.
\end{equation}
\end{proposition}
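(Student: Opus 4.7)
The plan unfolds in three steps, with the first providing the algebraic reduction to the convex case that drives the rest.

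For the first equality, set $g := f + \frac{\rho}{2}\|\cdot\|^{2}$ and use the polarisation identity $\|x\|^{2} - \|x_{0}\|^{2} = \|x-x_{0}\|^{2} + 2\langle x_{0}, x-x_{0}\rangle$. A direct rewriting shows that the defining inequality $f(x) - f(x_{0}) \geq \langle v, x-x_{0}\rangle - \frac{\rho}{2}\|x-x_{0}\|^{2} - \varepsilon$ of $v \in \partial_{(2,\rho/2)}^{\,\varepsilon} f(x_{0})$ is equivalent to $g(x) - g(x_{0}) \geq \langle v + \rho x_{0}, x-x_{0}\rangle - \varepsilon$, i.e.\ to $v + \rho x_{0} \in \partial_{0}^{\,\varepsilon} g(x_{0})$. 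The map $v \mapsto v + \rho x_{0}$ is therefore a bijection between the two subdifferential sets at every $x_{0}$, and in particular the two domains coincide.

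For the inclusion $\dom \partial_{(2,\rho/2)}^{\,\varepsilon} f \subset \dom f$, I would invoke the standard $+\infty$ convention: evaluating the defining inequality at $x = x_{0}$ produces the expression $f(x_{0}) - f(x_{0})$, which is well-defined only when $f(x_{0})$ is finite; consequently $\partial_{(2,\rho/2)}^{\,\varepsilon} f(x_{0}) = \emptyset$ whenever $x_{0} \notin \dom f$. For the final equality, combining with the first two parts it suffices to prove that $\dom \partial_{0}^{\,\varepsilon} g \supset \dom g$ when $\varepsilon > 0$. The Hilbert-space characterisation of $\rho$-weak convexity recalled just before the proposition statement ensures that $g$ is proper, lsc and \emph{globally} convex on $\X$, so by the Fenchel-Moreau theorem $g(x_{0}) = \sup_{v}\{\langle v, x_{0}\rangle - g^{*}(v)\}$ for every $x_{0} \in \dom g$. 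For $\varepsilon > 0$ I would then pick $v \in \X$ with $\langle v, x_{0}\rangle - g^{*}(v) \geq g(x_{0}) - \varepsilon$, and since $g(x) \geq \langle v, x\rangle - g^{*}(v)$ holds for every $x \in \X$, subtracting these two inequalities yields $g(x) - g(x_{0}) \geq \langle v, x-x_{0}\rangle - \varepsilon$, that is, $v \in \partial_{0}^{\,\varepsilon} g(x_{0})$.

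No serious obstacle is expected. The one point that deserves attention is the \emph{global} character of $\rho$-weak convexity, needed so that $g$ is convex on all of $\X$ and the Fenchel-Moreau argument applies; this is precisely the ingredient guaranteed by the convex-plus-quadratic characterisation stated in the paragraph following Prop.~\ref{prop:globalisation}. The restriction $\varepsilon > 0$ in the last statement is essential: for $\varepsilon = 0$ the nonemptiness of $\partial_{0} g(x_{0})$ on $\dom g$ can fail, so only the inclusion $\dom \partial_{(2,\rho/2)} f \subset \dom f$ survives.
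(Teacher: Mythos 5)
Your proof is correct and follows the same overall route as the paper: the heart of both arguments is the shift identity $\partial_{0}^{\,\varepsilon}\bigl(f+\tfrac{\rho}{2}\|\cdot\|^{2}\bigr)(x_{0})-\rho x_{0}=\partial_{(2,\rho/2)}^{\,\varepsilon}f(x_{0})$, which you establish exactly as the paper does (the paper expands the inner product directly rather than invoking the polarisation identity by name, but the computation is the same). The one genuine difference is in the last step: for $\varepsilon>0$ the paper simply cites \cite[Cor.~2.81]{barbu2012_convexity} for the fact that $\dom\partial_{0}^{\,\varepsilon}g=\dom g$ when $g$ is proper, convex and lsc, whereas you reprove it from the Fenchel--Moreau theorem by choosing $v$ with $\langle v,x_{0}\rangle-g^{*}(v)\geq g(x_{0})-\varepsilon$ and combining with the Fenchel--Young inequality. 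This makes the argument self-contained and makes visible exactly where lower semicontinuity (and properness, which you should state explicitly since the proposition does not) enters; the paper's citation is shorter but hides this. One small point: for the inclusion $\dom\partial_{(2,\rho/2)}^{\,\varepsilon}f\subset\dom f$, testing the subgradient inequality at $x=x_{0}$ gives the indeterminate $\infty-\infty$ when $f(x_{0})=+\infty$; the clean justification is to test it at any $x\in\dom f$ (nonempty by properness), where the left-hand side is $-\infty$ and the inequality visibly fails. Your closing observation that the equality $\dom\partial_{(2,\rho/2)}^{\,0}f=\dom f$ may fail for $\varepsilon=0$ is accurate and consistent with the proposition's hypotheses.
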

\vspace{0.2cm}
\begin{proof}
We start by showing  that for any ${x_0\in\dom \partial_{0}^{\,\varepsilon}(f + \frac{\rho}{2}\|\cdot\|^2) }$ and $\varepsilon\ge0$, we have that 
\begin{equation}
\label{eq: subdiff_eq:2}
    \partial_{0}^{\,\varepsilon}(f + \frac{\rho}{2}\|\cdot\|^2)(x_0) - \rho x_0 = \partial_{(2,\rho/2)}^{\,\varepsilon} f (x_0).
\end{equation}
Indeed, for any $v\in  \partial_0^{\,\varepsilon}\,(f+\frac{\rho}{2}\|\cdot\|^2)(x_0)$ and $x_0\in\dom \partial_{0}^{\,\varepsilon}(f + \frac{\rho}{2}\|\cdot\|^2)$, we have
\begin{equation}
\begin{aligned}
   &f(x)+\frac{\rho}{2}\|x\|^2-f(x_0)-\frac{\rho}{2}\|x_0\|^2\ge \langle v, x-x_0\rangle-\varepsilon\\
   &\quad\iff f(x)-f(x_0)\ge\\
   &\quad \langle v-\rho x_0, x-x_0 \rangle-\frac{\rho}{2}\|x-x_0\|^2-\varepsilon\\
   \end{aligned}
\end{equation}
which is equivalent to the fact that ${v-\rho x_0 \in\partial_{(2,{\rho}/{2})}^{\,\varepsilon} f(x_0)}$ and proves \eqref{eq: subdiff_eq:2}.
Hence, \linebreak ${\dom  \partial_{0}^{\,\varepsilon}(f + \frac{\rho}{2}\|\cdot\|^2)=\dom \partial_{(2,{\rho}/{2})}^{\,\varepsilon} f}$. 
Since $\partial_0^{\,\varepsilon}$ corresponds to the $\varepsilon\text{-}$subdifferential for convex functions and function $(f+ \frac{\rho}{2})(\cdot)$ is convex, we have that for all $\varepsilon>0$
\begin{equation}
\dom \partial_0^{\,\varepsilon}(f + \frac{\rho}{2}\|\cdot\|^2) = \dom (f + \frac{\rho}{2}\|\cdot\|^2)
\end{equation}
(see {\cite[Cor. 2.81]{barbu2012_convexity}}).  Since $\dom f=\dom (f+\frac{\rho}{2}\|\cdot\|^2)$, the assertion follows.
% }
\end{proof}
\vspace{0.3cm}
In Prop. \ref{prop: domain}, we include the assumption of lower semicontinuity on $f$ because it is required by \cite[Cor. 2.81]{barbu2012_convexity}.\\
\begin{definition}[$\varepsilon\text{-}$solution]\label{def:eps_sol} Let $\X$ be a normed  space.
 Let ${\fonc{f}{\X}{(-\infty,+\infty]}}$ be a proper function that is bounded from below. Then, for any $\varepsilon\geq0$, the element $x_\varepsilon$ is said to be a $\varepsilon\text{-}$\textit{solution} to the minimisation problem
 \begin{equation}
     \minimize{x\in \X}{f(x)}
 \end{equation}
 if the following condition is satisfied:
 \begin{equation}
     \left(\forall x \in \X\right)\qquad f(x_\varepsilon) \leq f(x) + \varepsilon.\\
 \end{equation}
\end{definition}

\vspace{0.2cm}
\begin{definition}[$\varepsilon\text{-}C\text{-}$critical point]\label{def:var_crit_point} Let $\X$ be a normed  space and $\varepsilon\geq 0$. Let $\fonc{f}{\X}{ (-\infty,+\infty]}$ be a proper function. A point $x\in \X$ is said to be a $\varepsilon\text{-}C\text{-}$\textit{critical point} of $f$ if $0\in\partial^{\,\varepsilon}_{(2,C)} f(x)$. The set of $\varepsilon\text{-}C\text{-}$critical points is identified as 
\begin{equation}
    \varepsilon\operatorname{-crit}_{C}\; f := \{x\in\X\,|\,0\in \partial^{\,\varepsilon}_{(2,C)} f(x) \}.
\end{equation}
% In the case 
% \begin{equation}
%     0\in \partial_{(2,C)}^{\,\varepsilon} f(x),
% \end{equation}
% we say that $x$ is a $\varepsilon\text{-}$$C$-\textit{critical point} and we write 
% \begin{equation}
%     x \in \varepsilon\operatorname{-crit}_C\; h.
% \end{equation}
%In the case of $\rho\text{-}$weakly convex function, we consider the $\varepsilon\text{-}$$\rho/2$-critical points and we write "$\varepsilon\text{-}$\textit{critical points}" and use the notation $\varepsilon\operatorname{-crit}$.

{When $\varepsilon=0$, we simply write $\operatorname{crit}_C f$.}
When $f$ is $\rho\text{-}$weakly convex, it is of particular interest to consider  $\varepsilon\text{-}\rho/2\text{-critical}$ points and then we write "$\varepsilon\text{-\textit{critical}}$ \textit{points}" and use the notation $\varepsilon\operatorname{-crit}$. \\
\end{definition}

{\begin{example}\label{ex: weak convexity}
    Let $\X = \R$. The function $f$ defined for every $x\in\X$ as $f(x) =  ||x|^2-1|$ is 2-paraconvex with $C=1$. In addition, for $x_0=0$ we have $0\in \partial_{(2,1)}f(x_0) $, since, for every $x\in\X$, function $f$ can be rewritten as 
    \begin{equation}
  f(x) = \max \{x^2-1,-x^2+1\}\\
\end{equation}
implying
\begin{flalign}
 \quad   (\forall x \in \R) &&  f(x) \geq  -x^2+1, &&&
\end{flalign}
and
\begin{flalign}
 \quad   (\forall x \in \R) &&  f(x) - f(x_0)\geq  -(x-x_0)^2 &&&
\end{flalign}
which shows that $x^*=0$ satisfies the global proximal subdifferential inequality from \eqref{eq:def_prox_subdiff}. Observe that $x_0 = 0$ is $x_0 \in \operatorname{crit}_1\, f$\\
    \end{example}}

\begin{remark}[Fermat's Rule]\label{rem:fermat}
We highlight that $\varepsilon\text{-}C\text{-}$criticality is a necessary condition for a point to be a $\varepsilon\text{-}$solution.
% \lele{\textbf{This does not make sense here:}} Notice that $x_\varepsilon\in \dom  \partial^{\,\varepsilon}_{(2,\rho/2)}\left(\frac{1}{2\alpha}\|\cdot -y\|^2 + f(\cdot)\right) \subseteq \dom f $.\lele{\textbf{probably we mean}} 
Notice that, under the assumptions of Prop. \ref{prop: domain}, $ \dom f = \dom  \partial^{\,\varepsilon}_{(2,C)} f$.
If $x_{\varepsilon} \in \dom f$ is a $\varepsilon\text{-}$solution of $f$, then
\begin{equation}
\begin{aligned}
    (\forall x\in \X) \qquad f(x) &\geq f(x_{\varepsilon}) - \varepsilon\\
    &\geq f(x_{\varepsilon}) - C \|x-x_{\varepsilon}\|^2- \varepsilon
    \end{aligned}
\end{equation}
for every $C\geq 0$. This implies $0\in \partial_{(2,C)}^{\,\varepsilon} f(x_{\varepsilon}).$
\end{remark}
% \lele{add a sentence on the exact case}
\section{Calculus rules}\label{sec:Calculus_Rules}

In the literature, there exist numerous results providing calculus rules for the Fr\'echet, the limiting and the proximal subdifferentials, see \textit{e.g.}\ %Mordukovich Shao
\cite{jourani1996Subdifferentiability,Kruger2003OnFS,Mordukhovich1995OnNS, mordukhovich2007exact,  Thibault1991} and many others. 
The main result of the present section is stated in Theo. \ref{theo: MReps}, where we provide the conditions for a sum rule for the global proximal $\varepsilon\text{-}$subdifferentials (in the sense of Def. \ref{def:eps:prox_subdiff}) of the sum of two weakly convex functions.
The proposed result allows us to extend the sum rule in \cite[Theo. 5.1]{jourani1996Subdifferentiability} -- %which is defined 
proved for exact proximal subdifferentials in normed  spaces -- to proximal $\varepsilon\text{-}$subdifferentials in Hilbert spaces: the interesting aspect of such rule is that it allows us to trace the modulus of proximal subdifferentiability.

The following notion of $\rho$-conjugate function will be used in the proof of Theo. \ref{theo: MReps}.\\

\begin{definition}
Let $\X$ be a Hilbert space. Let ${\fonc{f}{\X}{(-\infty,+\infty]}}$ be a proper function. For every $\rho\geq 0$ the function $\fonc{(h)_{\rho}^*}{\X}{[-\infty,+\infty]}$ defined as 
\begin{equation}
    (f)^*_\rho(u) := \sup_{y\in \X} \left\{ -\frac{\rho}{2}\|y\|^2 + \langle u,y\rangle - f(y)\right\}
\end{equation}
is called $\rho$-\textit{conjugate} of $f$ at $u\in \X$ 
(when $\rho=0$ we obtain the definition of the conjugate as defined in convex analysis and in this case we omit the subscript).\\
\end{definition}
\vspace{0.2cm}

We recall the following result, which is an important fact in view of the proof of Theo. \ref{theo: MReps}.\\
\begin{theorem}[{\cite[Theo. 3]{rockafellar1966extension} }]
\label{th: rockafellar theorem 3}
Let $\X$ be a Hilbert space. Let $\fonc{f_0,\,f_1}{\X}{(-\infty,+\infty]}$ be proper convex functions.  Assume that $\dom f_0\cap \dom f_1$ contains a point at which either $f_0$ or $f_1$ is continuous.
Then, for all $s,\,x\in\X$ we have %there exist $p_0,\,p_1$, $s=p_0+p_1$, such that
\begin{equation}
    \label{eq: rockafellar th 3}
    \begin{split}
        &(a)\quad \left(f_0+f_1 \right)^*(s)= \min_{\underset{s = p_0+p_1}{p_0,p_1\in \X}}\big\{f_0^*(p_0)+f_1^*(p_1)\big\}\\
        &(b)\quad \partial_0 \left(f_0+f_1 \right) (x)=\partial_0 f_0 (x)+\partial_0 f_1 (x)
    \end{split}
\end{equation}
\end{theorem}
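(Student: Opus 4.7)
Since this is the classical Rockafellar inf-convolution/sum rule, my plan is to establish part (a) via a Fenchel-duality argument and then derive part (b) as a clean consequence of (a) together with the Fenchel–Young equality. The easy inclusion in (a) is immediate: for any decomposition $s=p_0+p_1$, splitting the supremum gives
\begin{equation*}
(f_0+f_1)^*(s)=\sup_{y}\{\langle p_0,y\rangle-f_0(y)+\langle p_1,y\rangle-f_1(y)\}\le f_0^*(p_0)+f_1^*(p_1),
\end{equation*}
so $(f_0+f_1)^*(s)\le\inf_{p_0+p_1=s}\{f_0^*(p_0)+f_1^*(p_1)\}$. The whole difficulty sits in the reverse inequality together with attainment of the minimum, and it is there that the continuity assumption on $\dom f_0\cap \dom f_1$ plays its role.

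For the nontrivial inequality in (a), my plan is to fix $s\in\X$ and consider the perturbation function $F:\X\times\X\to(-\infty,+\infty]$ defined by $F(y,u)=f_0(y)+f_1(y+u)-\langle s,y\rangle$, whose value at $u=0$ gives $-(f_0+f_1)^*(s)$ after taking $\inf_y$. The associated value function $\varphi(u)=\inf_y F(y,u)$ is convex, and the continuity hypothesis forces $\varphi$ to be finite and continuous on a neighbourhood of the origin (here either $f_0$ or $f_1$ being continuous at an interior common point translates into local boundedness of $\varphi$ near $0$), hence subdifferentiable there. A standard Fenchel–Young/conjugate computation then yields $\varphi^{**}(0)=\varphi(0)$ together with attainment of the infimum in the dual representation $\varphi^{**}(0)=-\inf_p\{f_0^*(s-p)+f_1^*(p)\}$, giving exactly the required identity with attainment. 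The main obstacle is precisely this qualification step: verifying that the continuity of one summand at a point in the intersection of the domains is strong enough to rule out a duality gap and to guarantee the infimum on the right of (a) is attained; everything else is either Fenchel–Young or direct estimation.

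Given (a), part (b) follows almost mechanically. For the inclusion $\partial_0 f_0(x)+\partial_0 f_1(x)\subseteq \partial_0(f_0+f_1)(x)$ I would just add the two defining subgradient inequalities. For the reverse inclusion, take $v\in\partial_0(f_0+f_1)(x)$; the characterisation of subgradients via conjugates gives the Fenchel–Young equality
\begin{equation*}
(f_0+f_1)(x)+(f_0+f_1)^*(v)=\langle v,x\rangle.
\end{equation*}
Using (a), pick a decomposition $v=p_0+p_1$ attaining the minimum, so that $(f_0+f_1)^*(v)=f_0^*(p_0)+f_1^*(p_1)$. Substituting and rearranging yields
\begin{equation*}
\bigl[f_0(x)+f_0^*(p_0)-\langle p_0,x\rangle\bigr]+\bigl[f_1(x)+f_1^*(p_1)-\langle p_1,x\rangle\bigr]=0.
\end{equation*}
Each bracketed term is non-negative by Fenchel–Young, hence both vanish, which is the Fenchel–Young equality characterising $p_0\in\partial_0 f_0(x)$ and $p_1\in\partial_0 f_1(x)$. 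This writes $v=p_0+p_1\in\partial_0 f_0(x)+\partial_0 f_1(x)$ and completes the proof.
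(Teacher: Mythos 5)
The paper does not prove this statement: it is recalled verbatim from Rockafellar's 1966 duality paper and used as a black box in the proof of Prop.~\ref{prop: conj}, so there is no in-paper argument to compare yours against. Your proof is correct and is the standard perturbation-duality route: the value function $\varphi(u)=\inf_y\{f_0(y)+f_1(y+u)-\langle s,y\rangle\}$ satisfies $\varphi^*(q)=f_0^*(s-q)+f_1^*(q)$, the continuity hypothesis makes $\varphi$ bounded above near $0$ and hence subdifferentiable there, and any $q\in\partial\varphi(0)$ gives both the absence of a duality gap and the attainment of the minimum in (a); part (b) then follows from (a) by the Fenchel--Young sandwich exactly as you write it. Two small points worth making explicit: (i) you should dispose of the degenerate case $\varphi(0)=-\infty$ (equivalently $(f_0+f_1)^*(s)=+\infty$) separately, since then the easy inequality already forces every decomposition to give $+\infty$ and the ``min'' is vacuous, while the continuity-at-$0$ argument for $\varphi$ requires $\varphi$ to be proper; and (ii) in (b) the reverse inclusion only needs to be checked when $\partial_0(f_0+f_1)(x)\neq\emptyset$, which guarantees $x\in\dom f_0\cap\dom f_1$ and the finiteness needed for the two bracketed Fenchel--Young terms to be well defined and sum to zero. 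With those remarks your argument is complete.
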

\vspace{0.3cm}
\begin{remark}
By \cite[Theo. 15.3 (Attouch–Br\'ezis Theorem)]{Bauschke2017}, \cite{ attouch1986duality} , $(a)$ of Theo.  \ref{th: rockafellar theorem 3} can be proved under the assumption that 
  $f_0$ and $f_1$ are convex proper lsc functions such
that the conical hull of $\dom f_0- \dom f_1$ is a closed linear subspace, i.e.,

$$
0\in\text{sri}(\dom f_0 -\dom f_1),
$$
where sri denotes the strong relative interior, see \cite[Def. 6.9]{Bauschke2017}. The regularity assumption in \cite{attouch1986duality} is more general than the one in \cite{rockafellar1966extension} (see \cite[Remark 1.3]{attouch1986duality}). However, in some cases it is easier to verify the regularity condition requested in \cite{rockafellar1966extension}.
\end{remark}

The following proposition provides an auxiliary result used in the proof of Theo. \ref{theo: MReps}.\\
\begin{proposition}
\label{prop: conj} Let $\X$ be a Hilbert space.
 For $i=0,1$, let function ${\fonc{f_i}{\X}{(-\infty,+\infty]}}$ be proper lower semicontinuous and $\rho_i$-weakly convex on $\X$ with $\rho_i\geq 0$. Assume that $\dom f_0\cap \dom f_1$ contains a point at which either $f_0$ or $f_1$ is continuous. 
Then the following holds: for any $s\in \dom(f_0+f_1)^*_\rho$, there exist $p_0, p_1 \in \mathcal{X}$ such that $s=p_0+p_1$ and  \begin{equation}
    (f_0+f_1)^*_{\rho_0+\rho_1} (s) = (f_0)^*_{\rho_0} (p_0) + (f_1)^*_{\rho_1} (p_1).
\end{equation}
\end{proposition}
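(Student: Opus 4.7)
The plan is to reduce the statement to the convex case and then apply Theorem~\ref{th: rockafellar theorem 3}$(a)$. For $i=0,1$, set $g_i := f_i + \tfrac{\rho_i}{2}\|\cdot\|^2$. By the characterisation of weak convexity on Hilbert spaces recalled after Prop.~\ref{prop:globalisation}, each $g_i$ is proper, convex, and lower semicontinuous. Moreover, since $\tfrac{\rho_i}{2}\|\cdot\|^2$ is finite and continuous everywhere on $\X$, we have $\dom g_i = \dom f_i$, and $g_i$ is continuous at any point where $f_i$ is continuous. Hence the qualification hypothesis on $f_0, f_1$ transfers verbatim to $g_0, g_1$.

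Next I would rewrite the $(\rho_0+\rho_1)$-conjugate in terms of $g_0+g_1$:
\begin{equation}
\begin{aligned}
(f_0+f_1)^*_{\rho_0+\rho_1}(s)
&= \sup_{y\in \X}\Big\{\langle s,y\rangle - f_0(y) - f_1(y) - \tfrac{\rho_0+\rho_1}{2}\|y\|^2\Big\}\\
&= \sup_{y\in \X}\Big\{\langle s,y\rangle - g_0(y) - g_1(y)\Big\}\\
&= (g_0+g_1)^*(s).
\end{aligned}
\end{equation}
Analogously, for any $p\in\X$ and each $i$,
\begin{equation}
g_i^*(p) = \sup_{y\in\X}\Big\{\langle p,y\rangle - f_i(y) - \tfrac{\rho_i}{2}\|y\|^2\Big\} = (f_i)^*_{\rho_i}(p).
\end{equation}

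Now I would invoke Theorem~\ref{th: rockafellar theorem 3}$(a)$ applied to $g_0, g_1$: since $s \in \dom (f_0+f_1)^*_{\rho_0+\rho_1} = \dom (g_0+g_1)^*$, the minimum is attained, so there exist $p_0,p_1\in\X$ with $s = p_0+p_1$ and
\begin{equation}
(g_0+g_1)^*(s) = g_0^*(p_0) + g_1^*(p_1).
\end{equation}
Combining the three displays yields exactly
\begin{equation}
(f_0+f_1)^*_{\rho_0+\rho_1}(s) = (f_0)^*_{\rho_0}(p_0) + (f_1)^*_{\rho_1}(p_1),
\end{equation}
which is the claim.

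The only conceptual point that requires care is the verification of the qualification condition for $g_0, g_1$, but as noted above this is immediate because the added quadratics are everywhere continuous and do not alter the effective domains; after that step the result is a direct application of the classical convex conjugation calculus.
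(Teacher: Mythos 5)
Your proof is correct and follows essentially the same route as the paper: both shift each $f_i$ by $\tfrac{\rho_i}{2}\|\cdot\|^2$ to obtain convex functions, apply Theorem~\ref{th: rockafellar theorem 3}$(a)$, and translate the conjugates back via $g_i^* = (f_i)^*_{\rho_i}$. Your explicit verification that the continuity/domain qualification transfers to the shifted functions is a small point the paper leaves implicit, but it does not change the argument.
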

\vspace{0.2cm}

\begin{proof}
% \begin{comment}
We have that $\overline{f}_{0}(\cdot) = f_0(\cdot)+\rho_0/2\|\cdot \|^2$ and $\overline{f}_{1}(\cdot) = f_1(\cdot)+\rho_1/2\|\cdot \|^2$ are convex. 
% By \autoref{theo: Rinfimal convolution}, 
%By \cite[Theo. 3]{rockafellar1966extension} the infimal convolution, $(f_2\square f_3)^* (x)$ is exact, i.e.
By Theo. \ref{th: rockafellar theorem 3},
there exist $p_0, p_1$ such that $s=p_0+p_1$ and  
\begin{equation}
    (\overline{f}_{0}+\overline{f}_{1})^* (s) = (\overline{f}_{0})^* (p_0) + (\overline{f}_{1})^* (p_1)
\end{equation}
Notice that for $i=0,1$
\begin{equation}
    \begin{aligned}
        \overline{f}_{i}^*(\cdot) &:= \sup_{y\in \X} \left\{ \langle \cdot,y\rangle - \overline{f}_{i}(y)\right\}\\
        &=\sup_{y\in \X} \left\{ \langle \cdot,y\rangle - f_i(y)-\frac{\rho_i}{2}\|y\|^2\right\}\\
       & = (f_i)_{\rho_i}^*(\cdot)
    \end{aligned}
\end{equation}
so
\begin{equation}
    (\overline{f}_{0}+\overline{f}_{1})^* (s) = (f_0)^*_{\rho_0} (p_0) + (f_1)^*_{\rho_1} (p_1)
\end{equation}
and in conclusion
\begin{equation}
    ({f}_{0}+{f}_{1})^*_{\rho_0+\rho_1} (s) = (f_0)^*_{\rho_0} (p_0) + (f_1)^*_{\rho_1} (p_1).
\end{equation}
% \end{comment}
\end{proof}
\vspace{0.2cm}

Now we are ready to prove the following  sum rule
 for proximal $\varepsilon\text{-}$subdifferentials $\partial_{(2,{\rho}/{2})}^{\,\varepsilon}$. This result generalises {\cite[Theo. 3.1.1]{hiriart2013convex}} and \cite[Theo. 2.8.7]{zalinescu2002convex}, which are formulated for convex functions and convex subdifferentials.  An important aspect of our result -- which will be used below in the analysis of proximal operators -- is that it allows us to trace the modulus of proximal subdifferentiability  (as related to the modulus of weak convexity of the functions involved). Indeed, a useful consequence of this is that whenever a weakly convex function $f$ is expressed as the sum of a convex function $f_0$ and a weakly-convex function $f_1$, the proximal subdifferential of $f$ corresponds to a (Minkowski) sum of the convex subdifferential of $f_0$ and of the proximal subdifferential of $f_1$. This will be exploited in the analysis of proximal operators in Section \ref{sec:inexact} \\

\begin{theorem}[Sum Rule for $\varepsilon\text{-}$subdifferential]
\label{theo: MReps} Let $\X$ be a Hilbert space. 
For $i=0,1$, let function $\fonc{f_i}{\X}{(-\infty,+\infty]}$ be proper lower semicontinuous and $\rho_i$-weakly convex on $\X$ with $\rho_i\geq 0$. Then, for all $x \in \dom f_0 \cap \dom f_1$ and for all $\varepsilon_0,\varepsilon_1 \geq 0$ we have
\begin{equation}
    \label{eq: inclusion MR: eps}
    \partial_{(2,\rho_0 / 2)}^{\,\varepsilon_0}f_0(x) +\partial_{(2,\rho_1 / 2)}^{\,\varepsilon _1}f_1(x) \subseteq \partial_{(2,{\rho}/{2})}^{\,\varepsilon}(f_0+f_1)(x)
\end{equation}
for all $\varepsilon\geq\varepsilon_0+\varepsilon_1$ and for all $\rho \geq \rho_0+\rho_1$.
The equality

\begin{equation}
\label{eq: equality MR: eps}
\begin{split}
    &\partial_{(2,(\rho_0+\rho_1)/2)}^{\,\varepsilon} (f_0 + f_1)(x)  =\\
    &\bigcup_{\varepsilon_{0},\varepsilon_{1}\,|\,\varepsilon_{0}+\varepsilon_{1}\leq \varepsilon} \partial_{(2,\rho_0/2)}^{\,\varepsilon_0}f_0(x) + \partial_{(2,\rho_1/2)}^{\,\varepsilon_1} f_1(x)
\end{split}
\end{equation}
holds when $\dom f_0\cap \dom f_1$ contains a point at which either $f_0+{\rho_0}/2\|\cdot \|^2$ or $f_1(\cdot)+{\rho_1}/2\|\cdot \|^2$ is continuous.\\
\end{theorem}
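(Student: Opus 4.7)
The plan is to split the theorem into the (easy) inclusion \eqref{eq: inclusion MR: eps} and the (harder) reverse inclusion needed for the equality \eqref{eq: equality MR: eps}, and to reduce the harder part to Proposition \ref{prop: conj} via a Fenchel-Young-type reformulation of the proximal $\varepsilon$-subdifferential in terms of the $\rho$-conjugate.

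For the inclusion I would start by picking $v_i\in \partial_{(2,\rho_i/2)}^{\varepsilon_i}f_i(x)$, writing the two defining inequalities
\begin{equation}
f_i(y)\ge f_i(x)+\langle v_i, y-x\rangle-\frac{\rho_i}{2}\|y-x\|^{2}-\varepsilon_i\quad\forall y\in\X,
\end{equation}
and summing them to conclude $v_0+v_1\in\partial_{(2,(\rho_0+\rho_1)/2)}^{\varepsilon_0+\varepsilon_1}(f_0+f_1)(x)$. The general case $\rho\ge\rho_0+\rho_1$, $\varepsilon\ge\varepsilon_0+\varepsilon_1$ then follows from the monotonicity \eqref{eq: mono}, and the inclusion in \eqref{eq: equality MR: eps} (the $\supseteq$-direction) is the special case with $\rho=\rho_0+\rho_1$.

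For the converse inclusion in \eqref{eq: equality MR: eps}, the key observation I would establish first is the Fenchel-Young characterisation
\begin{equation}
v\in \partial_{(2,\rho/2)}^{\varepsilon} f(x)\iff f(x)+\frac{\rho}{2}\|x\|^{2}+(f)^{*}_{\rho}(v+\rho x)\le \langle v+\rho x,x\rangle+\varepsilon,
\end{equation}
which is obtained by isolating $(f)^*_\rho(v+\rho x)$ after completing the square in the defining inequality (equivalently, from $\overline f:=f+\tfrac{\rho}{2}\|\cdot\|^{2}$ being convex with $\overline f^*=(f)^*_\rho$ and $v\in\partial_{(2,\rho/2)}^\varepsilon f(x)\Leftrightarrow v+\rho x\in\partial_0^\varepsilon \overline f(x)$, as already exploited in \eqref{eq: subdiff_eq:2}). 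Given $v\in \partial_{(2,(\rho_0+\rho_1)/2)}^{\varepsilon}(f_0+f_1)(x)$, I would apply this characterisation with $\rho=\rho_0+\rho_1$ to get
\begin{equation}
(f_0+f_1)(x)+\tfrac{\rho_0+\rho_1}{2}\|x\|^{2}+(f_0+f_1)^{*}_{\rho_0+\rho_1}\!\bigl(v+(\rho_0+\rho_1)x\bigr)\le \langle v+(\rho_0+\rho_1)x,x\rangle+\varepsilon,
\end{equation}
and then invoke Proposition \ref{prop: conj} (whose continuity hypothesis is exactly the one assumed here) to split
\begin{equation}
(f_0+f_1)^{*}_{\rho_0+\rho_1}\!\bigl(v+(\rho_0+\rho_1)x\bigr)=(f_0)^{*}_{\rho_0}(p_0)+(f_1)^{*}_{\rho_1}(p_1),\quad p_0+p_1=v+(\rho_0+\rho_1)x.
\end{equation}
Setting $v_i:=p_i-\rho_i x$ (so that $v_0+v_1=v$) and
\begin{equation}
\varepsilon_i:=f_i(x)+\tfrac{\rho_i}{2}\|x\|^{2}+(f_i)^{*}_{\rho_i}(p_i)-\langle p_i,x\rangle\ge 0,
\end{equation}
non-negativity follows from the Fenchel-Young inequality applied to $\overline f_i$, and summing the two definitions yields $\varepsilon_0+\varepsilon_1\le\varepsilon$, while the characterisation above with $\rho_i$ in place of $\rho$ gives $v_i\in\partial_{(2,\rho_i/2)}^{\varepsilon_i}f_i(x)$, closing the argument.

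The main obstacle is the exact splitting of the $(\rho_0+\rho_1)$-conjugate of the sum into the sum of individual $\rho_i$-conjugates with $p_0+p_1=v+(\rho_0+\rho_1)x$; this is precisely what Proposition \ref{prop: conj} delivers, which is in turn a consequence of Theorem \ref{th: rockafellar theorem 3} applied to the convexifications $\overline f_i$. Without the continuity hypothesis the minimum in the infimal convolution of conjugates need not be attained, and although the $\supseteq$-inclusion in \eqref{eq: equality MR: eps} still holds, the decomposition $v=v_0+v_1$ required for the $\subseteq$-direction could fail.
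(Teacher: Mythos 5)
Your proposal is correct and follows essentially the same route as the paper: the easy inclusion by summing the defining inequalities, and the reverse inclusion by reducing $v\in\partial_{(2,\rho/2)}^{\varepsilon}(f_0+f_1)(x)$ to a Fenchel--Young inequality for the $\rho$-conjugate, splitting $(f_0+f_1)^*_{\rho_0+\rho_1}$ via Proposition \ref{prop: conj}, and defining $\varepsilon_0,\varepsilon_1$ exactly as the paper does. The only cosmetic difference is that you derive the characterisation $v\in\partial_{(2,\rho/2)}^{\varepsilon}f(x)\Leftrightarrow f(x)+\tfrac{\rho}{2}\|x\|^2+(f)^*_{\rho}(v+\rho x)\le\langle v+\rho x,x\rangle+\varepsilon$ directly, whereas the paper cites it from an external reference.
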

\begin{proof}
For $x\in\X$, if ${w\in\partial_{(2,\rho_0/2)}^{\,\varepsilon_0} f_0(x)}$, ${v\in\partial_{(2,\rho_1/2)}^{\,\varepsilon_1} f_1(x)}$, then it is clear that $w+v\in \partial_{(2,(\rho_0+\rho_1)/2)}^{\,\varepsilon} (f_0 + f_1)(x)$. Hence the inclusion \eqref{eq: inclusion MR: eps} is satisfied.

To prove the equality in \eqref{eq: equality MR: eps}, let us consider ${ x \in \dom f_0 \cap \dom f_1}$ and ${u\in\partial_{(2,\rho/2)}^{\,\varepsilon}(f_0+f_1)(x)}$, where ${\rho=\rho_0+\rho_1}$. By \cite[Theo. 2.4.ii, Eq. (5)]{Bednarczuk2021_OnDuality} we have 
\begin{equation}
\label{eq:BedSyg:new}
\begin{split}
 (f_0+f_1)(x) + (f_0+f_1)^*_\rho(u + \rho x)\\ \leq -\frac{\rho}{2}\|x\|^2 + \langle u +\rho x,x\rangle + \varepsilon
 \end{split}
\end{equation}

The inequality in \eqref{eq:BedSyg:new} implies that ${u + \rho x\in \dom (f_0+f_1)^*_\rho}$. By applying Prop. \ref{prop: conj}, there exist two elements ${p_0,p_1\in \X}$ such that ${u + \rho x = p_0 + p_1}$ and 
\begin{equation}
    (f_0+f_1)^*_\rho (u + \rho x) = f^*_{\rho_0}(p_0) + (f_1)^*_{\rho_1}(p_1)
\end{equation}
so that \eqref{eq:BedSyg:new} can be rewritten as 
\begin{equation}
\label{eq:BedSyg:new:2}
\begin{split}
 (f_0+f_1)(x) + (f_0)^*_{\rho_0}(p_0) + (f_1)^*_{\rho_1}(p_1) \\\leq -\frac{\rho}{2}\|x\|^2 + \langle p_0+p_1,x\rangle + \varepsilon
 \end{split}
\end{equation}
for all $ x \in \dom f_0 \cap \dom f_1$.
We now define the following values
\begin{align}\label{eq:eps0:new}
    \varepsilon_0:= f_0(x) + (f_0)_{\rho_0}^*(p_0) - \langle p_0, x \rangle + \frac{\rho_0}{2}\|x\|^2 \geq 0 \quad \\
    \label{eq:eps1:new}
    \varepsilon_1:= f_1(x) + (f_1)^*_{\rho_1}(p_1) - \langle p_1, x\rangle + \frac{\rho_1}{2}\|x\|^2 \geq 0 \quad
\end{align}
which are positive in view of the definition of $\rho$-conjugate.  
Notice that \eqref{eq:eps0:new} and \eqref{eq:eps1:new} can be rewritten as
\begin{equation}
\begin{split}
    \varepsilon_0 = f_0(x) + (f_0)^*_{\rho_0}((p_0- \rho_0 x) + \rho_0 x)\\ - \langle (p_0-\rho_0 x) + \rho_0 x, x\rangle + \frac{\rho_0}{2}\|x\|^2
    \end{split}
\end{equation}
\begin{equation}
\begin{split}
    \varepsilon_1 = f_1(x) + (f_1)^*_{\rho_1}((p_1- \rho_1 x) + \rho_1 x) \\- \langle (p_1-\rho_1 x) + \rho_1 x, x\rangle + \frac{\rho_1}{2}\|x\|^2
    \end{split}
\end{equation}
from which, by applying \cite[Theo. 2.4.ii, Eq. (5)]{Bednarczuk2021_OnDuality} in a similar fashion as in \eqref{eq:BedSyg:new}, we obtain 
\begin{equation}
   (p_0-\rho_0 x )\in \partial ^{\,\varepsilon_0}_{(2,\rho_0/2)}f_0(x)
\end{equation}
\begin{equation}
   (p_1-\rho_1 x )\in \partial ^{\,\varepsilon_1}_{(2,\rho_1/2)}f_1(x)
\end{equation}
which completes the proof.
\end{proof}
\vspace{0.3cm}
\begin{remark}
When $\varepsilon=0$, the sum rule presented in the theorem above can be shown to hold for $\gamma$-paraconvex fuctions, $\gamma>1$, defined over complete metric spaces (see {\cite[Theo. 5.1, Cor. 5.1]{jourani1996Subdifferentiability}}).  The sum rule in {\cite[Theo. 5.1, Cor. 5.1]{jourani1996Subdifferentiability}} also allows us to trace the modulus of subdifferentiability.
Our effort in Theo. \ref{theo: MReps} is to extend this result to the more general notion of $\varepsilon\text{-}$subdifferentials and, in order to do so, we exploited the structure of Hilbert spaces.
It is worth noticing that, as in the case of the sum rule for $\partial_{(\gamma, C)}$,
also in the case of the sum rule for $\partial^{\,\varepsilon}_{(\gamma, C)}$ we are able to control the modulus of proximal subdifferentiability. \\
\end{remark}
\begin{remark}
Let $f_i(x)$, $i=0,1$ be a $\rho_i$-weakly convex function.
If there exist $p_0, p_1$ such that 
${p_0+p_1=u\in \partial_{(2,(\rho_0+\rho_1)/2)}^{\,\varepsilon} (f_0 + f_1)(x)}$
and
$$
 p_0\in\partial_{(2,\rho_0/2)}^{\,\varepsilon_0}f_0(x) ,\quad p_1\in\partial_{(2,\rho_1/2)}^{\,\varepsilon_1} f_1(x)
$$
then by \eqref{eq: mono}, for all $\varepsilon_i'>\varepsilon_i$, $i=0,1$, we have
$$
p_0\in\partial_{(2,\rho_0/2)}^{\,\varepsilon'_0}f_0(x) ,\quad p_1\in\partial_{(2,\rho_1/2)}^{\,\varepsilon'_1} f_1(x).
$$

In particular, we can set $\varepsilon_1'+\varepsilon_2'=\varepsilon$ in \eqref{eq: equality MR: eps}.\\
\end{remark}

In the following theorem we show that, in presence of differentiable functions, the notion of proximal $\varepsilon\text{-}$subdifferentials allows us to infer an inclusion which involves the gradient of the differentiable function.\\

\begin{theorem}
\label{theo: kruger:2} Let $\X$ be a Hilbert space. 
Let ${\fonc{f_0}{\X}{(-\infty,+\infty]}}$ be proper, convex and differentiable with a $L_{0}$-Lipschitz continuous gradient on the whole space $\X$. Let ${\fonc{f_1}{\X}{(-\infty,+\infty]}}$ be proper and $\rho\text{-}$weakly convex on $\X$ with $\rho\ge 0$. %Assuming that $f_0$ is  %and $f_{1}$ is globally proximally $\frac{\rho}{2}$-subdifferentiable 
Then, we  have the following inclusion
\begin{equation}
 \label{eq: prox inclusion 3}
    \partial^{\,\varepsilon}_{(2,\rho/2)}(f_{0}+f_{1})(x)\subset \nabla(f_{0})(x) +  \partial^{\,\varepsilon}_{(2,\rho/2+L_{0}/2)}f_{1}(x)
\end{equation}
for all $x \in \dom f_1$ such that $\partial^{\,\varepsilon}_{(2,\rho/2)}f_{1}(x)\neq\emptyset$.\\
\end{theorem}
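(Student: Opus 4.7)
The plan is to proceed directly from the definition of the proximal $\varepsilon$-subdifferential (Def. \ref{def:eps:prox_subdiff}) and to exploit the descent lemma for the smooth function $f_0$. I would first pick an arbitrary $u\in \partial^{\,\varepsilon}_{(2,\rho/2)}(f_0+f_1)(x)$, which yields, for every $y\in\X$,
\begin{equation}
(f_0+f_1)(y)-(f_0+f_1)(x)\ \geq\ \langle u, y-x\rangle - \frac{\rho}{2}\|y-x\|^2 - \varepsilon.
\end{equation}
The target is to show that $u-\nabla f_0(x)$ lies in $\partial^{\,\varepsilon}_{(2,\rho/2+L_0/2)}f_1(x)$, that is, the analogous inequality for $f_1$ alone with an inflated quadratic modulus.

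The key ingredient is the classical descent lemma: since $f_0$ is convex and differentiable on $\X$ with $L_0$-Lipschitz gradient, one has the global quadratic upper bound
\begin{equation}
f_0(y)\ \leq\ f_0(x) + \langle \nabla f_0(x), y-x\rangle + \frac{L_0}{2}\|y-x\|^2 \qquad (\forall\,x,y\in\X).
\end{equation}
Rearranged as $f_0(x)-f_0(y)\geq -\langle \nabla f_0(x),y-x\rangle-(L_0/2)\|y-x\|^2$ and summed with the previous inequality for $u$, this eliminates $f_0$ from the left-hand side and contributes an additional term $-(L_0/2)\|y-x\|^2$ on the right, together with the linear term $-\langle \nabla f_0(x),y-x\rangle$. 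Reorganising yields exactly
\begin{equation}
f_1(y)-f_1(x)\ \geq\ \langle u-\nabla f_0(x),\, y-x\rangle - \frac{\rho+L_0}{2}\|y-x\|^2 - \varepsilon
\end{equation}
for every $y\in\X$, which is the defining inequality for $u-\nabla f_0(x)\in \partial^{\,\varepsilon}_{(2,\rho/2+L_0/2)}f_1(x)$. Writing $u = \nabla f_0(x) + (u-\nabla f_0(x))$ gives the desired inclusion.

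There is no genuine technical obstacle here: the whole proof is a one-line combination of the defining inequality of the global proximal $\varepsilon$-subdifferential with the descent lemma, with the Lipschitz constant $L_0$ of $\nabla f_0$ simply being absorbed into the modulus of proximal subdifferentiability through the quadratic term. The slight subtlety is the role of the assumption $\partial^{\,\varepsilon}_{(2,\rho/2)}f_1(x)\neq\emptyset$: the argument itself does not require it (the inclusion \eqref{eq: prox inclusion 3} trivially holds when the left-hand side is empty), but assuming it ensures that the right-hand side is nonempty as well, since by the monotonicity property \eqref{eq: mono} we have $\partial^{\,\varepsilon}_{(2,\rho/2)}f_1(x)\subseteq\partial^{\,\varepsilon}_{(2,\rho/2+L_0/2)}f_1(x)$, so the conclusion is nonvacuous.
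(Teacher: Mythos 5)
Your proof is correct and follows essentially the same route as the paper: the paper's argument is precisely the descent lemma for $f_0$ (phrased there as proximal $L_{0}$-subdifferentiability of $-f_{0}$, via \cite[Lemma 2.64]{Bauschke2017}) added to the defining inequality of $\partial^{\,\varepsilon}_{(2,\rho/2)}(f_{0}+f_{1})(x)$. Your closing observation about the nonemptiness hypothesis is a sensible clarification that the paper does not spell out, but it does not change the substance of the argument.
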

\begin{proof}
In view of {\cite[Lemma 2.64]{Bauschke2017}}, both $f_{0}$ and $-f_{0}$ are proximally $L_{0}$-subdifferentiable on $\X$.
Precisely, for $f_{0}$ we have that for every $x,y\in \X$
\begin{align} 
% \label{eq:sub_conv:1}
% &f_{0}(x)-f_{0}(y)\ge \langle \nabla f_{0}(y)\ |\ x-y\rangle \\
\label{eq:sub_conv:2}
&(-f_{0})(y)-(-f_{0})(x)\ge\\ &\qquad \langle \nabla (-f_{0})(x)\ |\ y-x\rangle-\frac{L_0}{2}\|y-x\|^{2}.
\end{align}

Let us choose $x\in \dom f_1$. If ${\partial_{(2,\rho/2)}^{\,\varepsilon}(f_0+f_1)(x) = \emptyset}$, nothing needs to be proved. Otherwise, we take ${v\in \partial_{(2,\rho/2)}^{\,\varepsilon}(f_0+f_1)(x)}$. %The non-emptiness of the proximal subdifferential is ensured by Prop. \autoref{prop: domain}, (since $x\in \dom(f_0 + f_1)$). 
For every $y\in\X$,
 by adding  
\begin{equation}\label{eq:sub_conv:4}
(f_{1}+f_{0})(y)-(f_{1}+f_{0})(x)\ge \langle  v\ |\ y-x\rangle-\frac{\rho}{2}\|y-x\|^{2}-\varepsilon
\end{equation}
and \eqref{eq:sub_conv:2} we get
\begin{equation}
\partial^{\,\varepsilon}_{(2,\rho/2)}(f_{1}+f_{0})(x)+\nabla(-f_{0})(x)\subset \partial^{\,\varepsilon}_{(2,\rho/2+L_{0}/2)}f_{1}(x)
\end{equation}
\textit{i.e.}\
\begin{equation}
\partial^{\,\varepsilon}_{(2,\rho/2)}(f_{1}+f_{0})(x)\subset \partial^{\,\varepsilon}_{(2,\rho/2+L_{0}/2)}f_{1}(x)+\nabla(f_{0})(x).
\end{equation}

\end{proof}

\begin{remark}
We highlight the following facts.
\begin{itemize}
\item For $\varepsilon>0$, if $f_1$ is lower semicontinous, the non-emptiness of the proximal subdifferential is ensured by Prop. \autoref{prop: domain}, since $x\in \dom(f_0 + f_1)$.
\item For $\varepsilon>0$, {Theo. \ref{theo: kruger:2} still holds when $f_1$ is convex, i.e. ${\rho=0}$, but only if the proximal subdifferential is considered in the right hand side of the inclusion, as the proximal subdifferential satisfies the inclusion presented in \eqref{eq: mono}.}
In fact the inclusion becomes
$$
\partial^{\,\varepsilon}_{0}(f_{0}+f_{1})(x)\subset \nabla(f_{0})(x) +  \partial^{\,\varepsilon}_{(2,L_{0}/2)}f_{1}(x).
$$
\end{itemize}
\end{remark}
\vspace{ 0.3cm}

\section{inexact proximal maps}\label{sec:inexact}

In general settings, the computation of the proximal map needs to be addressed as an independent optimisation problem. 
Some practical examples involves
 non-convex $\ell_p$-seminorms (\textit{i.e.}\ $p\in(0,1)$) or the convex $\ell_p$-norms (\textit{i.e.}\ $p\geq1$), unless $p$ takes some specific values \cite{Chaux2007}.
Another example is given by the combination of a sparsity-promoting functions with a non-orthogonal linear operator, as in the case of the popular discrete Total Variation functional \cite{rudin1992nonlinear} (and its non-convex modifications), which has been extensively used in the context of image and signal processing. In these cases, at each point, the proximal map is defined up to a certain degree of accuracy and in the framework of proximal algorithms, it is important to carry out a convergence analysis that takes this fact into account. In order to do so, we consider the concept of $\varepsilon\text{-}$solution for an optimisation problem (see Def. \ref{def:eps_sol}) and the related notion of $\varepsilon\text{-}$proximal point.\\

\begin{definition}[$\varepsilon\text{-}$proximal point]\label{def:eps_prox} Let $\X$ be a normed vector space.
Let function $f:\X\rightarrow(-\infty,+\infty]$ be proper and bounded from below and $\alpha>0$. Then for all $y\in \X$ and for all $\varepsilon\geq 0$, any $\varepsilon\text{-}$solution to the proximal minimisation problem 
\begin{equation} 
\label{eq:epsmini}
\minimize{x\in\X}{ f(x)+\frac{1}{2 \alpha}\|x-y\|^{2},}
\end{equation}
 is said to be a $\varepsilon\text{-}$proximal point for $f$ at $y$ with respect to $\alpha$.  The set of all $\varepsilon\text{-}$proximal points of $f$ at $y$ with respect to $\alpha$ is denoted as
\begin{equation}
    \varepsilon\text{-}\prox_{\alpha f}(y) := \left\{x\in \X\,|\, x\  \text{is\;a\;}\varepsilon\text{-solution\;of\;\eqref{eq:epsmini}} \right\}\\
\end{equation}
\end{definition}
\vspace{0.2cm}

In the following result, we provide a relationship between the $\varepsilon\text{-}$proximal operator and the ${\varepsilon\text{-proximal}}$ subdifferentials of weakly convex function, using the sum rule from Theo. \ref{theo: kruger:2}. Specifically, in Prop. \ref{cor:eps:prox:2}, we trace the constant $C$ of the $(2,C)$-subdifferential of $f$.\\
\begin{proposition}
	\label{cor:eps:prox:2} 
	Let $\X$ be a Hilbert space.
	%\begin{lemma}\label{lem:prox}
		Let ${f:\X \to (-\infty,+\infty]}$ be a proper, lower semicontinuous $\rho\text{-}$weakly convex function that is bounded from below on $\X$. Let $\varepsilon\ge 0$, $\alpha>0$. Then for every $y\in\X$, $x_\varepsilon \in \epsprox_{\alpha f}(y)$ implies
		\begin{equation}\label{eq:eps:prox_inclusion:kruger}
		 \frac{y-x_{\varepsilon}}{\alpha} \in \partial^{\,\varepsilon}_{(2,\rho/2 + 1/(2\alpha))} f(x_{\varepsilon}).
		\end{equation}
 \end{proposition}
\vspace{0.2cm}

\begin{proof}
By the definition of $\varepsilon\text{-}$proximal point and Remark \ref{rem:fermat} we have
\begin{align}\label{eq:eps:implication}
    x_{\varepsilon} \in & \ \epsprox_{\alpha f}(y)\\ &\implies 0\in \partial^{\,\varepsilon}_{(2,\rho/2)}\left(\frac{1}{2\alpha}\|\cdot -y\|^2 + f(\cdot)\right) (x_\varepsilon).
\end{align}
% In view of Cor. \ref{cor:assumption1},

The assumptions in Theo. \ref{theo: kruger:2} are satisfied since ${f_0(\cdot) = \frac{1}{2\alpha}\|\cdot - y\|^2}$ is differentiable on the whole space and its gradient has a Lipschitz constant $L_0= 1/\alpha$, hence we also have the inclusion 
\begin{align}
    0&\in \nabla \left(\frac{1}{2\alpha}\|\cdot - y\|^2\right)(x_{\varepsilon}) + \partial^{\,\varepsilon}_{(2,\rho/2 + 1/(2\alpha))} f(x_{\varepsilon}) \\
 &= \frac{x_{\varepsilon}-y}{\alpha} + \partial^{\,\varepsilon}_{(2,\rho/2+ 1/(2\alpha))} f(x_{\varepsilon}) 
     \end{align}
     which is equivalent to 
     \begin{align}
   \frac{y-x_{\varepsilon}}{\alpha} \in \partial^{\,\varepsilon}_{(2,\rho/2+ 1/(2\alpha))} f(x_{\varepsilon}).
\end{align}
\end{proof}

\begin{remark}
We highlight that \eqref{eq:eps:prox_inclusion:kruger} is related to the notion of Type-2 approximation of the proximal point that is proposed in
\cite{rasch2020inexact,salzo2012inexact}  in the convex settings. 
In other words, by using the ${\varepsilon\text{-proximal}}$ subdifferential instead of the 
(convex) ${\varepsilon\text{-subdifferential}}$, we can obtain a Type 2 approximation of the proximal point directly from Def. \ref{def:eps_prox}. %the definition of $\varepsilon\text{-}$solution \ewa{$\varepsilon\text{-}$proximal point}\lele{i'd keep solution because it highlights that we are considering an approximation in function value}.
This is due to Theo. \ref{theo: kruger:2}, at the expense of increasing the modulus of proximal subdifferentiability by $1/(2\alpha)$.
\end{remark}

\begin{remark}[$\varepsilon\text{-}$Subdifferential of a quadratic function]\label{rem:eps_sub_grad} Let $\X$ be a Hilbert space. As a consequence of \cite[Example 1.2.2]{hiriart2013convex}, for a function of the form 
$f_0(x) = \frac{1}{2\alpha}\|x-y\|^2$ for some $y\in \X$ and $\alpha>0$, we have 
\begin{comment}
$f_0(x) = \frac{1}{2}\langle Qx,x\rangle + \langle b, x \rangle $ with   $Q:=\frac{1}{\alpha} \mathsf{Id_{\X}}$ and $\alpha>0$,
\end{comment}
\begin{equation}  \partial_0^{\,\varepsilon} f_0(x)  = \{  \frac{x-y}{\alpha} + \frac{e}{\alpha} \,|\, \frac{1}{2\alpha}\|e\|^2 \leq \varepsilon\}.
\end{equation}
\end{remark}
\vspace{0.2cm}

In view of Remark \ref{rem:eps_sub_grad} and Theo. \ref{theo: MReps}, we can provide another interpretation for the $\varepsilon\text{-}$proximal points of a $\rho\text{-}$weakly convex function in terms of proximal $\varepsilon\text{-}$subdifferentials, where this time we are able to trace the modulus of weak convexity of the function. \\
\begin{proposition}\label{cor:proxsub}	Let $\X$ be a Hilbert space.
	%\begin{lemma}\label{lem:prox}
		Let ${f:\X \to (-\infty,+\infty]}$ be a proper, lower semicontinuous $\rho\text{-}$weakly convex function that is bounded from below on $\X$ and let $\varepsilon\ge 0$, $\alpha>0$.
If $x_{\varepsilon} \in \varepsilon\operatorname{-prox}_{\alpha f}(y)$, then there exist $\varepsilon_0,\varepsilon_1 \geq 0$ with $\varepsilon_0 + \varepsilon_1 \leq \varepsilon$ and there exists $e \in \X $ with $\frac{\|e\|^2}{2\alpha}\leq \varepsilon_0$ such that
\begin{equation}\label{eq:eps:prox_inclusion}
\frac{y - x_\varepsilon - e}{\alpha } \in \partial_{(2, \rho/2)}^{\,\varepsilon_1} f(x_\varepsilon).
\end{equation}
		If {$\varepsilon=0$  and} $1/\alpha > \rho$, we obtain the equivalence.
\end{proposition}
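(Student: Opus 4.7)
The plan is to combine Fermat's rule with the equality form of the sum rule in Theorem~\ref{theo: MReps}, splitting off the quadratic penalty via the explicit expression of its $\varepsilon$-subdifferential given in Remark~\ref{rem:eps_sub_grad}.

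First, by Definition~\ref{def:eps_prox}, $x_\varepsilon \in \epsprox_{\alpha f}(y)$ means that $x_\varepsilon$ is an $\varepsilon$-solution of the minimisation of $f_0 + f$, where $f_0(\cdot) := \frac{1}{2\alpha}\|\cdot - y\|^2$ is convex (hence $0$-weakly convex) and $f$ is $\rho$-weakly convex. The sum is therefore $\rho$-weakly convex, and Fermat's rule (Remark~\ref{rem:fermat}) yields
\begin{equation}
 0 \in \partial_{(2,\rho/2)}^{\,\varepsilon}(f_0 + f)(x_\varepsilon).
\end{equation}

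Second, $f_0$ is continuous on all of $\X$, so the qualification condition in Theorem~\ref{theo: MReps} holds at every point of $\dom f$. Applying the equality \eqref{eq: equality MR: eps} with $\rho_0 = 0$ and $\rho_1 = \rho$ produces $\varepsilon_0, \varepsilon_1 \geq 0$ with $\varepsilon_0 + \varepsilon_1 \leq \varepsilon$ and elements
\begin{equation}
 p_0 \in \partial_{0}^{\,\varepsilon_0} f_0(x_\varepsilon), \qquad p_1 \in \partial_{(2,\rho/2)}^{\,\varepsilon_1} f(x_\varepsilon), \qquad p_0 + p_1 = 0.
\end{equation}
Remark~\ref{rem:eps_sub_grad} describes $\partial_{0}^{\,\varepsilon_0} f_0(x_\varepsilon)$ explicitly: $p_0 = (x_\varepsilon - y + e)/\alpha$ for some $e \in \X$ with $\|e\|^2/(2\alpha) \leq \varepsilon_0$. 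Hence $p_1 = -p_0 = (y - x_\varepsilon - e)/\alpha$, which is precisely \eqref{eq:eps:prox_inclusion}.

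For the equivalence under $1/\alpha > \rho$ (the second half of the statement, which is cut off in the excerpt), I would reverse the argument: starting from $(y - x_\varepsilon - e)/\alpha \in \partial_{(2,\rho/2)}^{\,\varepsilon_1} f(x_\varepsilon)$ with $\|e\|^2/(2\alpha) \leq \varepsilon_0$, Remark~\ref{rem:eps_sub_grad} places $(x_\varepsilon - y + e)/\alpha \in \partial_{0}^{\,\varepsilon_0} f_0(x_\varepsilon)$, and the inclusion \eqref{eq: inclusion MR: eps} assembles these into $0 \in \partial_{(2,\rho/2)}^{\,\varepsilon}(f_0 + f)(x_\varepsilon)$. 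The assumption $1/\alpha > \rho$ makes $f_0 + f$ strongly convex with modulus $1/\alpha - \rho > 0$; in that regime, a point at which $0$ lies in the global proximal $\varepsilon$-subdifferential is actually an $\varepsilon$-solution of the minimisation, closing the equivalence.

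The main obstacle is bookkeeping of the moduli. In contrast to Proposition~\ref{cor:eps:prox:2}, where the sharper Theorem~\ref{theo: kruger:2} is invoked and the modulus on the right-hand side inflates to $\rho/2 + 1/(2\alpha)$, here we want to keep the right-hand side at $\rho/2$. This forces the use of the equality form of the sum rule in Theorem~\ref{theo: MReps} rather than the inclusion-plus-gradient shortcut, and introduces the auxiliary error term $e$ (with size controlled by $\varepsilon_0$) as the price of this tighter control on the modulus. Verifying the continuity qualification is immediate because $f_0$ is continuous on the whole space.
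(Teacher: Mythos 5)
Your proof matches the paper's argument step for step: Fermat's rule gives $0 \in \partial_{(2,\rho/2)}^{\,\varepsilon}\bigl(\tfrac{1}{2\alpha}\|\cdot-y\|^2 + f\bigr)(x_\varepsilon)$, the equality form of the sum rule in Theorem~\ref{theo: MReps} (with $\rho_0=0$, $\rho_1=\rho$, qualification satisfied since the quadratic is continuous everywhere) splits this into the two $\varepsilon_i$-subdifferentials, and Remark~\ref{rem:eps_sub_grad} extracts the error term $e$. The paper's own proof stops at the forward implication --- the ``equivalence'' clause is left dangling in the statement --- so your sketch of the converse is extra, but the forward direction is exactly the paper's proof.
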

\vspace{0.2cm}

\begin{proof}
By definition of $\varepsilon\text{-}$proximal point we have
\begin{align}
& x_{\varepsilon} \in \epsprox_{\alpha f}(y) \\
&\implies 0\in \partial^{\,\varepsilon}_{(2,\rho/2)}\left(\frac{1}{2\alpha}\|\cdot -y\|^2 + f(\cdot)\right) (x_\varepsilon).
\end{align}
We can now apply Theo. \ref{theo: MReps}, according to which there exist $\varepsilon_0,\varepsilon_1 \geq 0$ with  $\varepsilon_0 + \varepsilon_1 \leq \varepsilon$ such that
\begin{align}
\label{eq:inclusion_0}
   0\in \partial^{\,\varepsilon_0}_{0}\left(\frac{1}{2\alpha}\|\cdot -y\|^2\right)(x_\varepsilon) + \partial^{\,\varepsilon_1}_{(2,\rho/2)}f(x_\varepsilon).
\end{align}
By applying  Remark \ref{rem:eps_sub_grad}, we infer that there exists $e \in \X $ with $\frac{\|e\|^2}{2\alpha}\leq \varepsilon_0$ such that
\begin{equation}
    \frac{x_\varepsilon - y}{\alpha} + \frac{e}{\alpha} \in  \partial^{\,\varepsilon_0}_{0}\left(\frac{1}{2\alpha}\|\cdot -y\|^2\right)(x_\varepsilon)
\end{equation}
which implies
\begin{equation}
\frac{y - x_\varepsilon - e}{\alpha } \in \partial_{(2, \rho/2)}^{\,\varepsilon_1} f(x_\varepsilon).
\end{equation}

 {For the second part of the statement, we need to show that if $\frac{y-x_0}{\alpha} \in \partial_{(2,\rho/2)} f(x_{0})$,
then for every $x\in\X$ the following inequality holds
 \begin{flalign}
     && f(x_0) + \frac{1}{2\alpha}\|x_0-y\|^2 \leq f(x)  + \frac{1}{2\alpha}\|x-y\|^2. &&&
 \end{flalign}
 By definition of proximal subdifferential we have that for every $x\in\X$
 \begin{equation}
    f(x) \geq f(x_0) + \langle \frac{y-x_0}{\alpha},x-x_0\rangle -  \frac{\rho}{2} \|x-x_0\|^2 
 \end{equation}
 Since $1/\alpha > \rho$ we have that for every $x\in\X$
 \begin{equation}
    f(x) \geq f(x_0) + \langle \frac{y-x_0}{\alpha},x-x_0\rangle - \frac{1}{2\alpha}\|x-x_0\|^2
 \end{equation}
 We use the identity
  \begin{equation}
\begin{aligned}
    \innerprod{y-x_0}{x-x_0} &= -\frac{1}{2}\|y-x\|^2 + \frac{1}{2}\|x-x_0\|^2\\ &\qquad + \frac{1}{2}\|y-x_0\|^2,
\end{aligned}\end{equation}
which follows from \cite[Lemma 2.12]{Bauschke2017}. In conclusion, for every $x\in\X$ we have
\begin{equation}
 f(x_0)  + \frac{1}{2\alpha}\|y-x_0\|^2 \leq    f(x) +\frac{1}{2\alpha}\|y-x\|^2  .
 \end{equation}}
\end{proof}

\begin{remark}
The  interpretation provided by proposition \ref{cor:proxsub} is related to the notion of Type-1 approximation of the proximal point that is proposed in
\cite{rasch2020inexact,salzo2012inexact}  for convex functions.
\end{remark}

\begin{remark}
Notice that \eqref{eq:eps:prox_inclusion} from Prop. \ref{cor:proxsub} implies the inclusion \eqref{eq:eps:prox_inclusion:kruger} from Prop. \ref{cor:eps:prox:2}, which is based on Theo. \ref{theo: kruger:2}.
By definition of proximal $\varepsilon\text{-}$subdifferentials, \eqref{eq:eps:prox_inclusion} is equivalent to
\begin{align}
 &\left(x\in \X \right)\qquad  f\left(x\right)-f\left(x_{\varepsilon}\right)	\geq\\&\left\langle \frac{y-x_{\varepsilon}}{\alpha},x-x_{\varepsilon}\right\rangle -\frac{\rho}{2}\left\Vert x-x_{\varepsilon}\right\Vert ^{2} - \left\langle \frac{e}{\alpha}, x - x_{\varepsilon}\right\rangle -\varepsilon_1.
    \end{align}

We consider the following estimation
\begin{equation}
\begin{aligned}
    \left\langle \frac{e}{\alpha}, x - x_{\varepsilon}\right\rangle &\leq 
%     \frac{1}{\alpha_t}\|e_t\|\| x - x_{t+1}\|\\
%   & \leq \frac{1}{\alpha}\sqrt{2\alpha\varepsilon_0 }\| x - x_{\varepsilon}\|\\
%   &\leq \sqrt{\frac{2\varepsilon_0}{\alpha}}\| x - x_{\varepsilon}\|\\
%   &\leq 
\varepsilon_0 + \frac{1}{2\alpha}\|x-x_{\varepsilon}\|^2
    \end{aligned}
\end{equation}
which stems from Cauchy-Schwarz and Young's inequality. It follows that
\begin{align}
     f\left(x\right)-f\left(x_{\varepsilon}\right)	%&\geq\left\langle \frac{y-x_{\varepsilon}}{\alpha},x-x_{\varepsilon}\right\rangle -\frac{\rho}{2}\left\Vert x-x_{\varepsilon}\right\Vert ^{2} \\
    % &\qquad-\varepsilon_1 - \varepsilon_0 - \frac{1}{2\alpha}\|x-x_{\varepsilon}\|^2 \\
    &\geq\left\langle \frac{y-x_{\varepsilon}}{\alpha},x-x_{\varepsilon}\right\rangle\\&\qquad -\left(\frac{\rho}{2}
    + \frac{1}{2\alpha}\right)\left\Vert x-x_{\varepsilon}\right\Vert ^{2} -\varepsilon
    \end {align}
    that is equivalent to \eqref{eq:eps:prox_inclusion:kruger}
    \begin{equation}
    \frac{y-x_{\varepsilon}}{\alpha} \in \partial^{\,\varepsilon}_{(2,\rho/2 + 1/(2\alpha))} f(x_{\varepsilon}).
    \end{equation}
    \end{remark}
\vspace{0.2cm}

The inclusion in \eqref{eq:inclusion_0} further leads to the following corollary which is a generalisation of {\cite[Lemma 2]{rasch2020inexact}} from convex to proximal $\varepsilon\text{-}$subdifferentials.\\ %\ewa{proximal $\varepsilon\text{-}$subdifferentials or proximal $\varepsilon\text{-}$subdifferentials, see the beginning of the paper}\\

\begin{corollary}\label{cor:proxsub:2}	Let $\X$ be a Hilbert space.
	%\begin{lemma}\label{lem:prox}
		Let ${f:\X \to (-\infty,+\infty]}$ be a proper, lower semicontinuous $\rho\text{-}$weakly convex function that is bounded from below on $\X$ and let $\varepsilon\ge 0$.
If $x_{\varepsilon} \in \varepsilon\operatorname{-prox}_{\alpha f}(x)$, then there exist  $e \in \X $ with $\frac{\|e\|^2}{2\alpha}\leq \varepsilon$ such that
\begin{equation}\label{eq:eps:prox_inclusion:2}
\frac{y - x_\varepsilon - e}{\alpha } \in \partial_{(2, \rho/2)}^{\,\varepsilon} f(x_\varepsilon)
\end{equation}
\end{corollary}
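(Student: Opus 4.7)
The plan is to obtain Corollary \ref{cor:proxsub:2} as a direct specialization of Proposition \ref{cor:proxsub}, using only the monotonicity property \eqref{eq: mono} of the proximal $\varepsilon$-subdifferentials. The statement of the corollary is essentially the conclusion of Proposition \ref{cor:proxsub} with the two free parameters $\varepsilon_0, \varepsilon_1$ collapsed into the single tolerance $\varepsilon$, so no new machinery should be required.

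Concretely, I would start by invoking Proposition \ref{cor:proxsub}: from $x_\varepsilon \in \varepsilon\text{-}\prox_{\alpha f}(y)$, it yields nonnegative scalars $\varepsilon_0, \varepsilon_1$ with $\varepsilon_0 + \varepsilon_1 \leq \varepsilon$ and an element $e \in \X$ satisfying $\|e\|^2/(2\alpha) \leq \varepsilon_0$, such that
\begin{equation}
\frac{y - x_\varepsilon - e}{\alpha} \in \partial^{\,\varepsilon_1}_{(2,\rho/2)} f(x_\varepsilon).
\end{equation}
Then, since $\varepsilon_0 \leq \varepsilon_0 + \varepsilon_1 \leq \varepsilon$, the bound on $e$ automatically upgrades to $\|e\|^2/(2\alpha) \leq \varepsilon$; likewise $\varepsilon_1 \leq \varepsilon$, so the monotonicity inclusion \eqref{eq: mono} gives $\partial^{\,\varepsilon_1}_{(2,\rho/2)} f(x_\varepsilon) \subseteq \partial^{\,\varepsilon}_{(2,\rho/2)} f(x_\varepsilon)$, which delivers \eqref{eq:eps:prox_inclusion:2}.

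There is essentially no obstacle here: all the heavy lifting (the sum rule of Theorem \ref{theo: MReps} and the computation of the $\varepsilon$-subdifferential of the quadratic term via Remark \ref{rem:eps_sub_grad}) has already been done inside Proposition \ref{cor:proxsub}. The only thing to notice, and what justifies the statement being phrased as a corollary rather than a mere restatement, is that we are deliberately trading the refined split $(\varepsilon_0, \varepsilon_1)$ for a uniform bound in $\varepsilon$, which matches the form of \cite[Lemma 2]{rasch2020inexact} and makes the generalisation from convex to $\rho$-weakly convex functions transparent.
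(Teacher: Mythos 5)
Your proposal is correct and follows essentially the same route as the paper: both deduce the corollary from Prop.~\ref{cor:proxsub} by observing that $\varepsilon_0,\varepsilon_1\le\varepsilon$ and invoking the monotonicity property \eqref{eq: mono} (the paper enlarges the tolerances at the level of the inclusion \eqref{eq:inclusion_0} before extracting $e$, while you enlarge them after, which is an immaterial difference).
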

\vspace{0.2cm}
\begin{proof}
The proof is equivalent to the one from Prop. \ref{cor:proxsub}, with the only difference that we exploit the fact that the inclusion in \eqref{eq:inclusion_0} always implies the following inclusion
\begin{align}
\label{eq:inclusion_0:2}
   0\in \partial^{\,\varepsilon}_{0}\left(\frac{1}{2\alpha}\|\cdot -y\|^2\right)(x_\varepsilon) + \partial^{\,\varepsilon}_{(2,\rho/2)}f(x_\varepsilon) (x_\varepsilon).
\end{align}
by \eqref{eq: mono} and the fact that $\varepsilon_0$ and $ \varepsilon_1$ from Theo. \ref{theo: MReps} are always smaller than $\varepsilon$.
\end{proof}

\section{Conclusions}We discussed inexact proximal
operators (in the sense of Def. \ref{def:eps_prox}) for weakly convex functions defined on Hilbert spaces and their relationships to proximal $\varepsilon\text{-}$subdifferentials. We highlighted the main differences and similarities with the fully convex settings.
An important feature of the obtained results is that in Prop. \ref{cor:eps:prox:2} and Prop. \ref{cor:proxsub} we are able to control the moduli of proximal $\varepsilon\text{-}$subdifferentiability of $f$ (in relation to the moduli of weak convexity).
Our analysis provides tools to study the convergence of inexact proximal algorithms in which the proximal step is taken on a weakly convex function.
\\

\small{\textbf{Acknowledgments} This work has been supported by the ITN-ETN project TraDE-OPT funded by the European Union’s Horizon 2020 research and innovation programme under the Marie Skłodowska-Curie grant agreement No 861137. This work represents only the author’s view and the European Commission is not responsible for any use that may be made of the information it contains.}

\bibliographystyle{acm}
\bibliography{references}
\end{document}